\pdfoutput=1
\documentclass[11pt]{amsart}

% Beginning of header file

\usepackage{amsmath,amsthm, amssymb, amsfonts}
\usepackage[nohug]{diagrams}\diagramstyle[labelstyle=\scriptstyle]
\usepackage[mathscr]{eucal}
\usepackage{graphicx}
\usepackage[dvipsnames,usenames]{color}
\usepackage{subfigure}

\usepackage[colorlinks=true, urlcolor=NavyBlue, linkcolor=NavyBlue, citecolor=NavyBlue, pdftitle={Legendrian contact homology and nondestabilizability}, pdfauthor={Clayton Shonkwiler, David Shea Vela-Vick}, pdfsubject={Legendrian invariants}, pdfkeywords={Contact homology, Legendrian knots, 57R17; 57M25; 53D12; 53D40}]{hyperref}

%       Theorem environments

%% \theoremstyle{plain} %% This is the default
\newtheorem{theorem}{Theorem}

\newtheorem{lemma}{Lemma}[section]
\newtheorem{proposition}[lemma]{Proposition}
\newtheorem{conjecture}[lemma]{Conjecture}

\theoremstyle{definition}

\theoremstyle{definition}
\newtheorem{remark}[lemma]{Remark}

\newcommand{\thmref}[1]{Theorem~\ref{#1}}

%Math Definitions

\newcommand{\mr}[1]{\mathrm{#1}}
\newcommand{\mscr}[1]{\mathscr{#1}}

% End of header file

\begin{document}

\thispagestyle{empty}

\title[Legendrian Contact Homology and Nondestabilizability]{Legendrian Contact Homology and Nondestabilizability}
\author{Clayton Shonkwiler}
\address{Department of Mathematics \\ Haverford College}
\email{cshonkwi@haverford.edu}
\urladdr{\href{http://www.haverford.edu/math/cshonkwi}{http://www.haverford.edu/math/cshonkwi}}

\author{David Shea Vela-Vick}
\thanks{DSV was partially supported by an NSF Postdoctoral Research Fellowship}
\address{Department of Mathematics \\ Columbia University}
\email{shea@math.columbia.edu}
\urladdr{\href{http://www.math.columbia.edu/~shea}{http://www.math.columbia.edu/\~{}shea}}

\date{\today}
\keywords{Contact homology, Legendrian knots}
\subjclass[2000]{57R17; 57M25; 53D12; 53D40}
\maketitle

\begin{abstract}
We provide the first example of a Legendrian knot with nonvanishing contact homology whose Thurston--Bennequin invariant is not maximal.
\end{abstract}

\section{Introduction} % (fold)
\label{sec:introduction}

Since it was proposed by Etnyre \cite{Et1} and first implemented by Etnyre and Honda \cite{EtHon1}, the most common strategy for classifying Legendrian knots in a given knot type $K$ has been to approach the problem in two steps. First, find all Legendrian representatives of $K$ with maximal Thurston--Bennequin invariant, then attempt to show that all other Legendrian representatives of $K$ can be destabilized to one of these maximal examples.

This method has proven quite effective, but, as observed by Etnyre and Honda \cite{EtHon2}, not all nondestabilizable Legendrian knots have maximal Thurston--Bennequin invariant. Thus, one needs a means for determining which Legendrian knots are nondestabilizable.

A candidate for identifying nondestabilizable Legendrian knots is Legendrian contact homology, which has been one of the most powerful nonclassical invariants of Legendrian knots since it was defined by Chekanov \cite{Ch} and Eliashberg \cite{El}. This invariant, which takes the form of a differential graded algebra $(\mscr{A}, \partial)$ and is a specialized variant of symplectic field theory \cite{ElGH}, vanishes for stabilized Legendrian knots and is nonvanishing for every nondestabilizable Legendrian knot for which it has been computed. All such examples to date have had maximal Thurston--Bennequin invariant, but in \thmref{thm:m10161} we show that the Legendrian contact homology is nonvanishing for a certain nondestabilizable Legendrian knot with nonmaximal Thurston--Bennequin invariant.

We do this by showing that a related invariant, the characteristic algebra, is nontrivial. The \emph{characteristic algebra} was defined by Ng \cite{Ng2} as $C(L) := \mscr{A}_F/\langle \mr{Im}\, \partial \rangle$ and is an invariant of the Legendrian knot $L$ up to tame isomorphism. Here $F$ is a front diagram for $L$, $\mscr{A}_F$ is the free, noncommutative, unital $\mathbb{Z}/2$-algebra generated by the crossings and right cusps of $F$, and $\langle \mr{Im}\, \partial \rangle \subset \mscr{A}_F$ is the two-sided ideal generated by the image of the contact homology differential.

Ng conjectured that the characteristic algebra of a nondestabilizable Legendrian knot is nonvanishing \cite[Conjecture 6.4.1]{Ng2}, which would imply that the Legendrian contact homology for such knots is also nonvanishing (see Proposition~\ref{pro:ca_to_ch_nonvanish}).  We give some evidence for Ng's conjecture by providing the first example of a Legendrian knot with nonvanishing characteristic algebra which does not have maximal Thurston--Bennequin invariant.

\begin{theorem}\label{thm:m10161}
The contact homology and characteristic algebra of Chongchitmate and Ng's nondestabilizable Legendrian $m(10_{161})$ are nonvanishing.
\end{theorem}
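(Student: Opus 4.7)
The plan is to verify the theorem by direct computation with the front diagram $F$ of Chongchitmate and Ng's nondestabilizable Legendrian representative $L$ of $m(10_{161})$. First I would label every crossing and every right cusp of $F$, thereby writing down explicit free generators for the DGA $\mscr{A}_F$. Following Chekanov's algorithm (or the equivalent description in terms of admissible disks with corners at the chosen generators and boundary on the front), I would compute $\partial a$ for each generator $a$. The gradings of the generators can be read off from the Maslov potential on $F$; these degrees play no role in the characteristic algebra but are useful as a bookkeeping check.

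Once the differential is in hand, the core task is to show that the characteristic algebra $C(L) = \mscr{A}_F/\langle \mr{Im}\,\partial\rangle$ is nonvanishing, since by the proposition cited above this implies that the contact homology is also nonvanishing. The most reliable strategy is to exhibit a unital $\mathbb{Z}/2$-algebra homomorphism $\rho: \mscr{A}_F \to B$ into some tractable target algebra $B$ (for example, $\mathbb{Z}/2$ itself, a matrix algebra over $\mathbb{Z}/2$, or a quotient polynomial ring) such that $\rho(1) \neq 0$ and $\rho(\partial a) = 0$ for every generator $a$. Any such $\rho$ factors through $C(L)$, proving $C(L) \neq 0$. The tame isomorphism invariance of $C(L)$, due to Ng, guarantees that the choice of front $F$ within the Legendrian isotopy class of $L$ is immaterial.

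The main obstacle will be finding the representation $\rho$. For knots with many crossings and right cusps (here $L$ will have of order a dozen generators, and each $\partial a$ is a sum of words in these generators), a naive search is infeasible. In practice I would proceed by first setting to zero the images of all generators that appear as linear summands of some $\partial a$ (these are forced to be zero in $C(L)$) and then analyzing the relations that remain among the surviving generators. If the residual relations are compatible with sending all remaining generators to $0$ and $1 \mapsto 1$, we are done with $B = \mathbb{Z}/2$. Otherwise I would pass to a small matrix representation, guided by the pattern of nontrivial relations; experience from Ng's earlier work on the characteristic algebra suggests that a representation into $2\times 2$ or $3\times 3$ matrices over $\mathbb{Z}/2$ suffices in such examples.

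Finally, the implication from nonvanishing characteristic algebra to nonvanishing Legendrian contact homology is handled by Proposition~\ref{pro:ca_to_ch_nonvanish}, so no separate argument on the chain level is required. The output of the proof is thus a table of differentials, an explicit representation $\rho$, and the verification that $\rho$ kills each $\partial a$, together yielding both conclusions of the theorem simultaneously.
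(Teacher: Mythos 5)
Your overall architecture matches the paper's: compute the differential from the plat/front presentation, reduce the relations, and exhibit a nontrivial representation of $C(L)$, with Proposition~\ref{pro:ca_to_ch_nonvanish} supplying the passage to contact homology. The gap is at the step you yourself flag as "the main obstacle": your list of candidate targets $B$ (the field $\mathbb{Z}/2$, a commutative quotient, or $2\times 2$ / $3\times 3$ matrices over $\mathbb{Z}/2$) is exactly what fails for this knot, and finding a workable $B$ is the actual content of the proof. An algebra map $C(L)\to\mathbb{Z}/2$ is an ungraded augmentation, which by Fuchs--Ishkhanov/Sabloff would produce a normal ruling and hence force $tb$ to be maximal --- precluded here, since the whole point is that this representative has nonmaximal $tb$. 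Worse, after the natural simplification the paper arrives at the quotient
\[
\overline{C} \;=\; \mathbb{Z}/2\langle a,b,c,d\rangle/\langle ac+db=1,\ ba=0,\ bd=1,\ ca=1,\ cd=0\rangle ,
\]
in which $a$ has the left inverse $c$ but cannot satisfy $ac=1$: that would give $db=0$, whence $1=(bd)^2=b(db)d=0$. In any \emph{finite-dimensional} representation a one-sided inverse is automatically two-sided, so every finite-dimensional representation of $\overline{C}$ is zero; no matrix algebra over $\mathbb{Z}/2$ of any size can detect its nontriviality, and a commutative target fails for the same reason. Your appeal to "experience from Ng's earlier work" is misleading here precisely because all earlier nonvanishing examples had maximal $tb$.

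The missing idea is an infinite-dimensional representation: the paper takes a countably infinite-dimensional $\mathbb{Z}/2$-vector space $\mscr{H}$ with a fixed splitting $\mscr{H}\cong\mscr{H}_1\oplus\mscr{H}_2$, $\mscr{H}_i\cong\mscr{H}$, and lets $a,d$ act as the two inclusions and $b,c$ as the two projections (a Leavitt-algebra-type representation), which visibly satisfies all five relations and is nonzero. Without this (or some equivalent device, e.g., a normal form/Gr\"obner-basis argument showing $1\notin\langle\mathrm{Im}\,\partial\rangle$), your search procedure terminates without a certificate and the proof does not close. A small factual point in the same direction: the front has $35$ crossings and $5$ right cusps, so $\mscr{A}_F$ has $40$ generators, not "of order a dozen"; the reduction to the six (then four) essential generators is itself a nontrivial part of the argument.
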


\begin{remark}
	A similar argument to that given in the proof of Theorem~\ref{thm:m10161} shows that the contact homology and characteristic algebra of Chongchitmate and Ng's nondestabilizable Legendrian $m(10_{145})$ are also nonvanishing.
\end{remark}

\begin{remark}
There is a lift of the contact homology and characteristic algebra to $\mathbb{Z}[t,t^{-1}]$.  Nonvanishing over $\mathbb{Z}/2$ implies nonvanishing in the more general $\mathbb{Z}[t,t^{-1}]$ setting.
\end{remark}

The general situation is still far from clear, however, as we also provide some evidence against Ng's conjecture.  Chongchitmate and Ng exhibited a Legendrian $m(10_{139})$ which does not have maximal Thurston--Bennequin invariant and which they conjectured, based on computational evidence, is nondestabilizable and sits atop its own peak in the $tb$--$r$ mountain range.  In Section~\ref{sec:char_alg_m10_139} we prove:

\begin{proposition}\label{pro:vanish}
	The contact homology and characteristic algebra of Chongchitmate and Ng's Legendrian $m(10_{139})$ vanish identically over $\mathbb{Z}[t,t^{-1}]$. % (and, therefore, over $\mathbb{Z}/2$ as well).
\end{proposition}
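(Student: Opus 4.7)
My plan is a direct computation in the Chekanov--Eliashberg DGA $(\mscr{A}, \partial)$ over $\mathbb{Z}[t, t^{-1}]$ associated to Chongchitmate and Ng's front $F$ for $m(10_{139})$. After reproducing $F$ from their atlas, labelling its crossings and right cusps, and recording the Maslov gradings, I would apply Ng's resolution to pass to the Lagrangian projection and compute $\partial$ by enumerating rigid immersed polygons with a single positive corner, keeping careful track of the $t$-exponents dictated by the capping-path convention.

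Both conclusions of the proposition reduce to the single algebraic identity $\partial \beta = 1$ for some $\beta \in \mscr{A}$. Such a $\beta$ places $1 \in \mr{Im}\,\partial$, which forces the contact homology to vanish (for any cycle $\alpha$ we have $[\alpha] = [\alpha \cdot 1] = [\alpha][1] = 0$, since $[1] = 0$) and simultaneously places $1$ in the two-sided ideal $\langle \mr{Im}\,\partial\rangle$, so that $C(L)$ vanishes. Thus the task is to locate such a $\beta$. My strategy is to scan the grading-$1$ generators for one whose boundary has the form $\partial a = 1 + w$, where $w$ is a word in positive-degree generators; typically this corresponds to a visible short disk of $F$ with a right-cusp corner supplying the constant $1$ and a tail of higher-degree crossings. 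Using the Leibniz rule together with the remaining boundary relations, I would then iteratively rewrite $w$ until the total expression telescopes to $\partial\beta = 1$.

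The principal obstacle is combinatorial rather than conceptual: the front of $m(10_{139})$ has enough generators that disk enumeration and $t$-exponent bookkeeping are easy to get wrong, and the noncommutativity of $\mscr{A}$ prevents the familiar Gaussian-elimination shortcut that would streamline the reduction of $w$. To control this I would supplement the hand search with a computer-algebra check of the final identity, cross-verifying each disk against the diagram and each $t$-exponent against the capping-path orientation. Once $\partial\beta = 1$ is confirmed over $\mathbb{Z}[t, t^{-1}]$, Proposition~\ref{pro:vanish} follows immediately.
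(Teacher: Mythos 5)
Your proposal takes essentially the same route as the paper: both reduce the proposition to exhibiting an element $\beta$ with $\partial\beta = 1$, and your justification that this kills both the contact homology and the characteristic algebra is exactly right. The paper's proof consists precisely of writing down such a $\beta$ explicitly, namely
\[
	\beta = (x_2+x_{10})\bigl(\bigl((x_{41}x_{11} + x_{14}x_{42})x_{15} + x_{41}-x_{44}\bigr)x_{22} + x_{24}\bigr) + (x_4+x_{16})(x_{15}x_{22}+x_{19}) + x_{6} + x_{43},
\]
so to complete your argument you would need to carry out the search you describe and verify this (or an equivalent) identity against the differential in Appendix~\ref{sec:the_differential_m10_139}.
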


Assuming this knot is actually nondestabilizable, this would provide the first example of a nondestabilizable Legendrian knot with vanishing characteristic algebra or contact homology.  This suggests the following:

\begin{conjecture}\label{conj:vanishing}
	There exist nondestabilizable Legendrian knots with vanishing contact homology.
\end{conjecture}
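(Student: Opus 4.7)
The plan is to exhibit an explicit element $x \in \mathscr{A}_F$ associated to Chongchitmate--Ng's front diagram $F$ for $m(10_{139})$ satisfying $\partial x = 1$. Once such an $x$ is produced, both conclusions of the proposition follow by elementary manipulation. The characteristic algebra $C(L) = \mathscr{A}_F / \langle \mathrm{Im}\, \partial \rangle$ vanishes because its unit is killed: $1 = \partial x \in \mathrm{Im}\, \partial \subset \langle \mathrm{Im}\, \partial \rangle$. The contact homology $H_*(\mathscr{A}_F, \partial)$ vanishes because any cycle $z$ satisfies $z = \pm z \cdot \partial x = \pm \partial(zx)$ by the Leibniz rule (using $\partial z = 0$), so $[z] = 0$ in homology.

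First I would record the published Chongchitmate--Ng front $F$, label its crossings $a_1, \ldots, a_n$ and right cusps $c_1, \ldots, c_m$, and compute $\partial a_i$ and $\partial c_j$ over $\mathbb{Z}[t, t^{-1}]$ from the admissible disks in $F$ using Ng's combinatorial formulation of the differential. This produces an explicit list of relations $\partial a_i = R_i$ in $\mathscr{A}_F$. The grading constraint $|\partial| = -1$ forces the sought-after $x$ to be Maslov-homogeneous of degree $1$, which sharply restricts the candidate monomials. My search strategy is to work first modulo $t \mapsto 1$ over $\mathbb{Z}/2$, scan the right-hand sides $R_i$ for a generator whose boundary already contains an isolated constant term, and then cancel the remaining contributions by left and right multiplication with appropriate monomials in the other generators. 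Once a witness is found over $\mathbb{Z}/2$, I would decorate each monomial with the power of $t$ dictated by the gradings so that $\partial x = 1$ lifts to the full $\mathbb{Z}[t, t^{-1}]$ setting.

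The main obstacle is purely computational. The Legendrian front has enough crossings and cusps that the DGA has many generators and each $\partial a_i$ is a noncommutative polynomial with many terms; even with the grading restriction, a naive monomial ansatz becomes infeasible. In practice I would carry out the search in a computer algebra system capable of noncommutative polynomial arithmetic, pre-simplifying the relations by Gaussian-elimination-style reductions on the $R_i$ before committing to a search for $x$. Verification is the easy direction: once the candidate $x$ is exhibited, the identity $\partial x = 1$ is checked by a single symbolic substitution. The proof of Proposition~\ref{pro:vanish} therefore ultimately reduces to displaying $x$ together with that mechanical check, and the real content of the argument is the existence of such a witness, not its verification.
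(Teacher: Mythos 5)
There is a genuine gap, and it is not a computational one: you have written a proof strategy for Proposition~\ref{pro:vanish} (the vanishing of the contact homology and characteristic algebra of the Legendrian $m(10_{139})$), but the statement you were asked to prove is Conjecture~\ref{conj:vanishing}, which asserts the existence of a \emph{nondestabilizable} Legendrian knot with vanishing contact homology. Your argument establishes only half of what is needed. Exhibiting an explicit $x$ with $\partial x = 1$ does show that the DGA and the characteristic algebra of this particular front are trivial --- and this is exactly how the paper proves Proposition~\ref{pro:vanish}, by displaying such a witness built from the generators $x_2, x_4, x_6, x_{10}, \ldots, x_{44}$ and checking $\partial x = 1$ symbolically. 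But nowhere do you address why the Legendrian $m(10_{139})$ cannot be destabilized, and that is precisely the part that nobody knows how to prove. The paper is explicit about this: Chongchitmate and Ng only \emph{conjectured}, on the basis of computational evidence, that this representative is nondestabilizable, and the paper says ``Assuming this knot is actually nondestabilizable, this would provide the first example\dots'' before stating the claim as a conjecture rather than a theorem.

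The difficulty is structural, not just an oversight. The standard tools for certifying nondestabilizability --- nonvanishing contact homology, a nontrivial characteristic algebra, the existence of augmentations or graded normal rulings --- are exactly the invariants that your computation shows vanish for this knot. So the very success of your $\partial x = 1$ search destroys the usual route to the other half of the conjecture, and no replacement obstruction is currently available. Any complete proof of Conjecture~\ref{conj:vanishing} would require a genuinely new technique for ruling out destabilizations, which is why the paper can only offer Proposition~\ref{pro:vanish} as \emph{evidence} for the conjecture rather than a proof of it.
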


For background information on Legendrian knots and Legendrian contact homology, we refer the reader to Etnyre's survey \cite{Et2}.

\section*{Acknowledgements} % (fold)
\label{sec:acknowledgements}

We would like to thank John Etnyre for suggesting that we explore the relationship between the Thurston--Bennequin invariant and Legendrian contact homology.  Thanks also to Lenny Ng and Wutichai Chongchitmate for their work creating the Legendrian knot atlas \cite{Ng3}, Dylan Thurston for his helpful suggestion, and David Fithian for his time-saving \emph{Mathematica} program.
% section acknowledgements (end)
% section introduction (end)

\section{The $m(10_{161})$} % (fold)
\label{sec:m10_161}

As mentioned in the introduction, Etnyre and Honda \cite{EtHon2} presented the first example of a nondestabilizable Legendrian knot whose Thurston--Bennequin invariant is nonmaximal for its knot type.  This example is a Legendrian $(2,3)$-cable of the $(2,3)$-torus knot.  

Recently, Chongchitmate and Ng produced a conjectural atlas \cite{Ng3} for low-crossing Legendrian knots.  Included in this atlas are several new examples of nondestabilizable Legendrian knots whose Thurston--Bennequin invariants are not maximal.  In particular, Chongchitmate and Ng give examples of nondestabilizable Legendrian $m(10_{161})$ and $m(10_{145})$ whose Thurston--Bennequin invariants are nonmaximal ($m$ here stands for ``mirror'').

For the purposes of computing the contact homology differential %characteristic algebra of
for a Legendrian knot, it is useful to have it presented as the plat closure of a positive braid.  Using Chongchitmate and Ng's original presentation, it is not difficult to derive the plat diagram for the $m(10_{161})$ appearing in Figure~\ref{fig:Ng}.

\begin{figure}[htbp]
	\centering
	\includegraphics[scale=0.7]{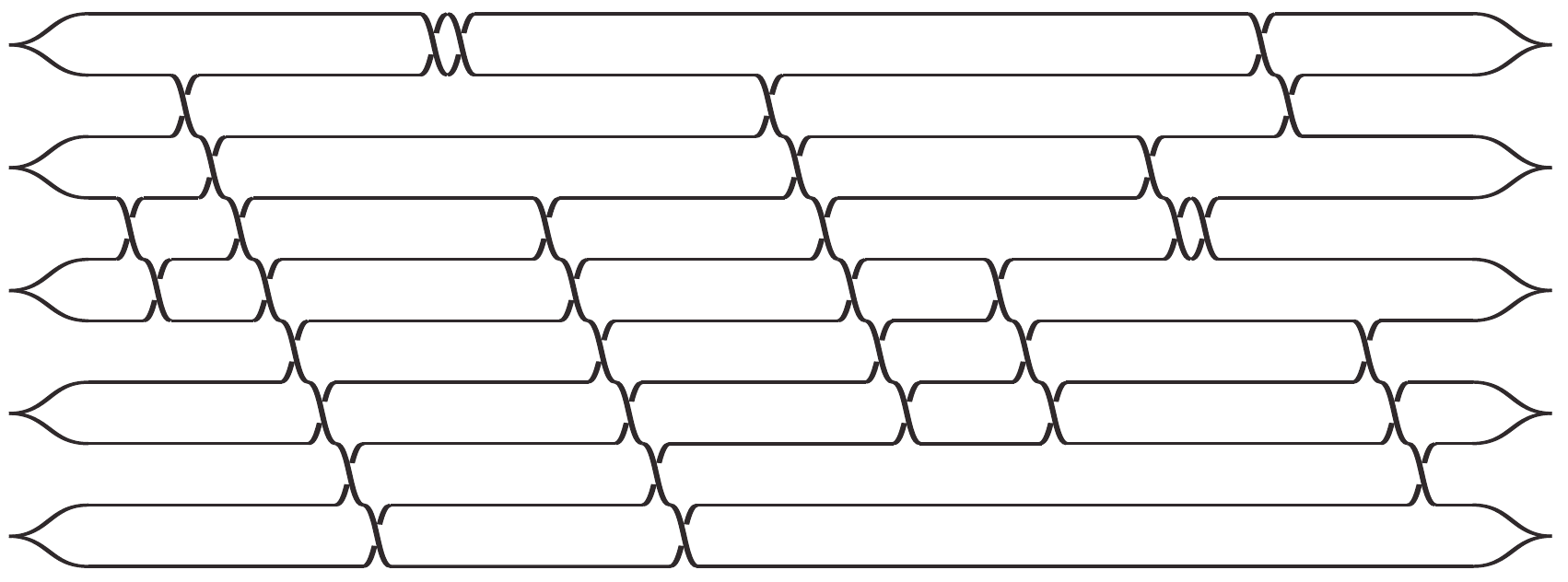}
	\caption{Chongchitmate and Ng's nondestabilizable $m(10_{161})$}
	\label{fig:Ng}
\end{figure}

The braid word defining the plat diagram in Figure~\ref{fig:Ng}
 is:\\

\noindent $4, 5, 2, 3, 4, 5, 6, 7, 8, 9, 1, 1, 4, 5, 6, 7, 8, 9, 2, 3, 4, 5, 6, 7, 5, 6, 7, 3, 4, 4, 1, 2, 6, 7, 8$\\

In Figure~\ref{fig:Ng} there are a total of 35 crossings and 5 right cusps.  The crossings are labeled $x_1$ through $x_{35}$ from left to right and the right cusps are labeled $x_{36}$ through $x_{40}$ from top to bottom. Therefore, for this front diagram for the $m(10_{161})$, $\mscr{A}_{m(10_{161})}$ is equal to $\mathbb{Z}/2 \langle x_{1}, \dots, x_{40} \rangle$, the free unital $\mathbb{Z}/2$-algebra of rank 40 generated by $x_{1}, \dots, x_{40}$.  The full boundary map is given in Appendix \ref{sec:the_differential}.

% section the_m10_161 (end)

\section{The Proof of Theorem~\ref{thm:m10161}} % (fold)
\label{sec:proof_of_thm_1}

%%%%%%%%%%

We begin with a straightforward observation relating (non)vanishing properties of the characteristic algebra to contact homology. 

\begin{proposition}\label{pro:ca_to_ch_nonvanish}
	Let $L$ be a Legendrian knot in the standard contact 3-sphere.  If the characteristic algebra of $L$ is nontrivial, then so is its contact homology.
\end{proposition}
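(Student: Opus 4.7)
My plan is to prove the contrapositive: I will show that if the contact homology of $L$ vanishes, then the characteristic algebra vanishes as well. The entire argument reduces to carefully tracking what ``vanishing'' means for each of these unital algebras.

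First I would observe that both the contact homology and the characteristic algebra are unital $\mathbb{Z}/2$-algebras, so each is the zero algebra precisely when its unit $1$ equals $0$. For the characteristic algebra $C(L) = \mscr{A}_F / \langle \mr{Im}\,\partial \rangle$, triviality is thus equivalent to $1 \in \langle \mr{Im}\,\partial \rangle$, the two-sided ideal generated by the image of the differential. For the contact homology $H_*(\mscr{A}, \partial)$, I would first note that $\partial$ is a derivation with $\partial 1 = 0$, so $\ker \partial$ is a unital subalgebra of $\mscr{A}_F$ and $\mr{Im}\,\partial$ is a two-sided ideal in $\ker \partial$ (given $a \in \ker\partial$ and $\partial b \in \mr{Im}\,\partial$, one computes $a\cdot \partial b = \partial(ab)$, and similarly on the other side). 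Hence $H_*(\mscr{A},\partial)$ inherits a unital algebra structure, and its vanishing is equivalent to $1 \in \mr{Im}\,\partial$.

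The conclusion is then immediate from the containment $\mr{Im}\,\partial \subseteq \langle \mr{Im}\,\partial \rangle$: if $1 = \partial a$ for some $a \in \mscr{A}_F$, then $1$ automatically lies in $\langle \mr{Im}\,\partial\rangle$, and so $C(L) = 0$. There is no genuine obstacle here; the proposition is essentially a tautology once the definitions of both invariants are unpacked, and the only item requiring care is verifying that each algebra is unital so that ``vanishing'' is detected by the unit.
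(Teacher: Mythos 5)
Your proof is correct and takes essentially the same approach as the paper's: both argue the contrapositive by noting that vanishing of a unital algebra means $1=0$, locating $1$ in the ideal of boundaries inside $\ker\partial$, and then pushing it into the two-sided ideal $\langle \mathrm{Im}\,\partial\rangle$ of the full algebra. Your extra observation that the Leibniz rule makes $\mathrm{Im}\,\partial$ itself a two-sided ideal in $\ker\partial$ (so that vanishing of contact homology is literally $1\in\mathrm{Im}\,\partial$) is a correct, slightly sharper phrasing of the same step.
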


\begin{proof}
	Suppose that the contact homology 
	\[
		\mathrm{CH}(L) = \frac{\ker(\partial)}{\hphantom{_{\ker}\!}\langle \mathrm{Im}\, \partial \rangle_{\ker}}
	\]
	of $L$ is trivial (here $\langle \mathrm{Im} \, \partial \rangle_{\ker}$ denotes the two-sided ideal generated by the image of the boundary map inside $\ker(\partial)$).  Then, since $\partial(1) = 0$, it must be the case that the unit element $1$ is contained in $\langle \mathrm{Im} \, \partial \rangle_{\ker}$.  However, since $\ker(\partial)$ is a subalgebra of $\mscr{A}_L$, this implies that $1$ must also be contained in the two-sided ideal $\langle \mathrm{Im} \, \partial \rangle$ generated by the image of the boundary map inside the full algebra $\mscr{A}_L$.  Therefore, the characteristic algebra of $L$ also vanishes, completing the proof of Proposition~\ref{pro:ca_to_ch_nonvanish}.
\end{proof}

By Proposition~\ref{pro:ca_to_ch_nonvanish}, \thmref{thm:m10161} will follow if we can show that the characteristic algebra of the Legendrian $m(10_{161})$ depicted in Figure~\ref{fig:Ng} is nontrivial.

%%%%%%%%%

The characteristic algebra $C(m(10_{161})) = \mscr{A}_{m(10_{161})} / \langle \mr{Im} \, \partial \rangle$ is 
\[
	C(m(10_{161}))= \mathbb{Z}/2 \langle x_1, \ldots , x_{40} \rangle / \langle \partial x_1, \ldots , \partial x_{40} \rangle.
\]
From the differential we have that
\begin{align*}
	\partial x_2 = x_1, & \quad \partial x_6 = x_3, \quad \partial x_5 = x_3 x_2 + x_4, \\
	\partial x_8 = x_7, & \quad \partial x_{10} = x_9, \quad \partial x_{15} = x_{14}, \\
	\partial x_{17} = x_{16}, & \quad \text{and} \quad \partial x_{26} = x_{25},
\end{align*}
so, in $C(m(10_{161}))$, 
\begin{equation}\label{eqn:original_zeros}
	x_1 = x_3 = x_4 = x_7 = x_9 = x_{14} = x_{16} = x_{25} = 0.
\end{equation}

To show that $C(m(10_{161})) \neq 0$ we will actually show that a quotient, $\overline{C} = C(m(10_{161})) / \mscr{I}$, is nontrivial.

Define $\mscr{I}$ as the two-sided ideal generated by the elements 
\begin{multline*}
	x_5, x_6, x_8, x_{10}, x_{15}, x_{17}, x_{18}, x_{19}, x_{20}, x_{21}, x_{22}, x_{23}, x_{24}, x_{26}, x_{31}, x_{32}, x_{35}, x_{36}, \\ x_{37}, x_{38}, x_{39},x_{40}, x_{30}+1, x_{34}+1 , x_{27}x_2 + 1, x_{11}x_{2}, x_{28} + x_{2}, x_{11} + x_{33}
\end{multline*}
and let
\[
	\overline{C} := C(m(10_{161}))/\mscr{I}.
\]

Using \eqref{eqn:original_zeros} and the relations of $\mscr{I}$, the defining relations of $C(m(10_{161}))$ (i.e. the boundary maps in Appendix~\ref{sec:the_differential}) can be simplified as
\begin{align}
\nonumber	x_2x_{13} + x_{12}x_{11} & = 1 \\
\nonumber	x_{11}x_{12} + x_{27}x_{12} & = 0\\
\nonumber	x_{13}x_{2} & =  1\\
\label{eqn:simplified_relations}	x_{11}(x_{29} + 1) & =  1\\
\nonumber	(x_{29} + 1)x_{11} + x_{2}x_{27} & =  1\\
\nonumber	x_{27}x_{12} & =  1 \\
\nonumber	x_{27}x_2 & = 1.
\end{align}
Therefore, $\overline{C}$ is isomorphic to $\mathbb{Z}/2\langle x_2, x_{11}, x_{12}, x_{13}, x_{27}, x_{29} \rangle$ modulo the relations in \eqref{eqn:simplified_relations}.

\begin{lemma}\label{lem:Aiso}
	The algebra $\overline{C}$ is isomorphic to the algebra
	\[
		\mathbb{Z}/2 \langle a,b,c,d \rangle / \langle ac+db=1, ba=0, bd=1, ca=1, cd=0 \rangle.
	\]
\end{lemma}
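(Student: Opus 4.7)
The plan is to guess the correct correspondence and show that two of the six generators of $\overline{C}$ are redundant. Specifically, I would set $a = x_2$, $b = x_{11}$, $c = x_{27}$, $d = x_{29}+1$, since this makes four of the five target relations essentially tautological: $ca = x_{27}x_2 = 1$ and $bd = x_{11}(x_{29}+1) = 1$ come directly from \eqref{eqn:simplified_relations}, $ac + db = x_2 x_{27} + (x_{29}+1)x_{11} = 1$ is also in \eqref{eqn:simplified_relations}, and $ba = x_{11}x_2 = 0$ is one of the defining relations of $\mscr{I}$. The remaining target relation $cd = 0$ would be deduced at the end.

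The key computational step is to eliminate $x_{12}$ and $x_{13}$. First, combining $x_{11}x_{12} + x_{27}x_{12} = 0$ with $x_{27}x_{12} = 1$ gives the auxiliary identity $x_{11}x_{12} = 1$. Multiplying the relation $(x_{29}+1)x_{11} + x_2 x_{27} = 1$ on the right by $x_{12}$ and using $x_{11}x_{12} = x_{27}x_{12} = 1$ then forces $x_{12} = x_2 + (x_{29}+1) = a+d$. Symmetrically, multiplying $x_2 x_{13} + x_{12}x_{11} = 1$ on the left by $x_{27}$ and using $x_{27}x_2 = x_{27}x_{12} = 1$ yields $x_{13} = x_{11} + x_{27} = b+c$. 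The missing target relation then falls out: $cd = x_{27}(x_{29}+1) = x_{27}(x_{12} + x_2) = 1 + 1 = 0$.

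With these substitutions in hand, I would build the isomorphism by defining the forward map from the free algebra on $a,b,c,d$ to $\overline{C}$ by the assignment above and checking that it kills the five target relations (already done). For the inverse, I would define a map from $\mathbb{Z}/2\langle x_2, x_{11}, x_{12}, x_{13}, x_{27}, x_{29}\rangle$ to the target algebra by sending $x_{12} \mapsto a+d$, $x_{13} \mapsto b+c$, $x_{29}\mapsto d+1$, and the other generators to their partners, and verify that each of the seven relations in \eqref{eqn:simplified_relations} expands to a consequence of the five target relations. The two maps are then visibly mutually inverse on generators. The only nontrivial part of the argument is spotting the hidden linear relations $x_{12} = x_2 + x_{29} + 1$ and $x_{13} = x_{11} + x_{27}$; once they are identified, the proof is pure bookkeeping, driven by the fact that $x_{27}$ has two right inverses ($x_2$ and $x_{12}$) and $x_{11}$ has two right inverses ($x_{12}$ and $x_{29}+1$).
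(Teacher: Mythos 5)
Your proof is correct and follows essentially the same strategy as the paper's: both hinge on extracting the hidden linear relations $x_{12} = x_2 + x_{29} + 1$ and $x_{13} = x_{11} + x_{27}$ from \eqref{eqn:simplified_relations} and then eliminating two of the six generators. The only difference is which pair you keep --- you retain $x_2, x_{11}$ as $a, b$ while the paper retains $x_{12}, x_{13}$ and eliminates $x_2, x_{11}$ --- a dual choice that makes four of the five target relations immediate under your assignment and slightly shortens the bookkeeping, while yielding the same presentation.
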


\begin{proof}
Define the map
\begin{align*}
x_{12} & \mapsto a \\
x_{13} & \mapsto b \\
x_{27} & \mapsto c \\
x_{29} + 1 & \mapsto d \\
x_{2} & \mapsto e \\
x_{11} & \mapsto f.
\end{align*}

Under this map, the relations in \eqref{eqn:simplified_relations} become
\begin{align}
\label{eqn:ebaf}	eb + af & = 1 \\
\label{eqn:faca}	fa + ca & = 0\\
\label{eqn:be}	be & =  1\\
\label{eqn:fd}	fd & =  1\\
\label{eqn:dfec}	df + ec & =  1\\
\label{eqn:ca}	ca & =  1 \\
\label{eqn:ce}	ce & = 1,
\end{align}
so $\overline{C}$ is isomorphic to $\mathbb{Z}/2\langle a,b,c,d,e,f\rangle$ modulo these relations. 

Note that, by adding \eqref{eqn:faca} to \eqref{eqn:ca}, the above relations imply
\begin{equation}
	\label{eqn:fa} fa = 1.
\end{equation}

Now, we claim that the relations in \eqref{eqn:ebaf}--\eqref{eqn:ce} are equivalent to the relations
\begin{align}
	\label{eqn:ca2}	ca & =  1 \\
	\label{eqn:bcf} b+c+f & = 0\\
	\label{eqn:ba} ba & = 0\\
	\label{eqn:ade} a+d+e & = 0\\
	\label{eqn:cd} cd & = 0\\
	\label{eqn:bd} bd & = 1\\
	\label{eqn:acdb} ac + db & = 1.
\end{align}

The relations \eqref{eqn:ebaf}--\eqref{eqn:ce} imply the relations \eqref{eqn:ca2}--\eqref{eqn:acdb} as follows:
\begin{itemize}
	\item The relation \eqref{eqn:ca2} already appears as \eqref{eqn:ca}.
	\item Multiply \eqref{eqn:ebaf} on the left by $c$ and simplify using \eqref{eqn:ca} and \eqref{eqn:ce} to get \eqref{eqn:bcf}.
	\item Multiply \eqref{eqn:bcf} on the right by $a$ and simplify using \eqref{eqn:fa} and \eqref{eqn:ca} to get \eqref{eqn:ba}.
	\item Multiply \eqref{eqn:dfec} on the right by $a$ and simplify using \eqref{eqn:fa} and \eqref{eqn:ca} to get \eqref{eqn:ade}.
	\item Multiply \eqref{eqn:ade} on the left by $c$ and simplify using \eqref{eqn:ca} and \eqref{eqn:ce} to get \eqref{eqn:cd}.
	\item Multiply \eqref{eqn:bcf} on the right by $d$ and simplify using \eqref{eqn:fd} and \eqref{eqn:cd} to get \eqref{eqn:bd}.
	\item Finally, multiply \eqref{eqn:bcf} on the left by $a$, multiply \eqref{eqn:ade} on the right by $b$, add the results and simplify using \eqref{eqn:ebaf} to get \eqref{eqn:acdb}.
\end{itemize}

On the other hand, we can derive \eqref{eqn:ebaf}--\eqref{eqn:ce} from \eqref{eqn:ca2}--\eqref{eqn:acdb} as follows:

\begin{itemize}
	\item Multiply \eqref{eqn:bcf} on the left by $a$, add to \eqref{eqn:acdb} and simplify using \eqref{eqn:ade} to get \eqref{eqn:ebaf}.
	\item Multiply \eqref{eqn:bcf} on the right by $a$ and simplify using \eqref{eqn:ca} and \eqref{eqn:ba} to get \eqref{eqn:faca}.
	\item Multiply \eqref{eqn:ade} on the left by $b$ and simplify using \eqref{eqn:ba} and \eqref{eqn:bd} to get \eqref{eqn:be}.
	\item Multiply \eqref{eqn:bcf} on the right by $d$ and simplify using \eqref{eqn:cd} and \eqref{eqn:bd} to get \eqref{eqn:fd}.
	\item Multiply \eqref{eqn:bcf} on the left by $d$, add to \eqref{eqn:acdb} and simplify using \eqref{eqn:ade} to get \eqref{eqn:dfec}.
	\item The relation \eqref{eqn:ca} appears as \eqref{eqn:ca2}.
	\item Multiply \eqref{eqn:ade} on the left by $c$ and simplify using \eqref{eqn:ca} and \eqref{eqn:cd} to get \eqref{eqn:ce}.
\end{itemize}

Therefore, since the two collections of relations \eqref{eqn:ebaf}--\eqref{eqn:ce} and \eqref{eqn:ca2}--\eqref{eqn:acdb} are equivalent, we see that
\begin{multline*}
	\overline{C} \simeq \mathbb{Z}/2 \langle a, b,c,d,e,f\rangle/\langle ca = 1, b+c+f=0, ba = 0, \\
	a+d+e = 0, cd=0, bd=1, ac+db=1\rangle.
\end{multline*}
Since $e = a+d$ and $f = b+c$, we can re-write $\overline{C}$ as 
\[
	\overline{C} \simeq \mathbb{Z}/2 \langle a,b,c,d \rangle / \langle ac+db=1, ba=0, bd=1, ca=1, cd=0\rangle,
\]
completing the proof of the lemma.
\end{proof}

The goal now is to show that $\overline{C}$ is nontrivial, which will imply that $C(m(10_{161}))$ is nontrivial as well. 

\begin{lemma}\label{lem:cbar_nontrivial}
	The algebra 
	\[
		\overline{C} = \mathbb{Z}/2 \langle a,b,c,d \rangle / \langle ac+db=1, ba=0, bd=1, ca=1, cd=0\rangle
	\]
	is nontrivial.
\end{lemma}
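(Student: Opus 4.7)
The plan is to show $\overline{C}$ is nontrivial by producing an explicit nonzero representation of it. Because $\overline{C}$ is presented by generators and relations, it suffices to assign to $a,b,c,d$ four linear operators on some nonzero $\mathbb{Z}/2$-vector space $V$ which satisfy the five defining relations; by the universal property of the free algebra this extends to a $\mathbb{Z}/2$-algebra homomorphism $\overline{C} \to \mathrm{End}_{\mathbb{Z}/2}(V)$, under which the unit of $\overline{C}$ maps to $\mathrm{id}_V \neq 0$. That forces $1 \neq 0$ in $\overline{C}$ and hence that $\overline{C}$ is nontrivial.

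Before writing operators down, it is worth noting the structure behind the relations. Setting $e_1 := ac$ and $e_2 := db$, the relations $ca = bd = 1$ and $cd = ba = 0$ force $e_1^2 = e_1$, $e_2^2 = e_2$, and $e_1 e_2 = e_2 e_1 = 0$, while the remaining relation $ac + db = 1$ says $e_1 + e_2 = 1$. So $e_1, e_2$ should be complementary orthogonal idempotents, $a$ and $d$ should be partial isometries mapping $V$ isomorphically onto the summands $e_1 V$ and $e_2 V$ respectively, and $c$ and $b$ should be their left-inverses, each vanishing on the opposite summand. These are the standard Cuntz--Leavitt relations; no finite-dimensional faithful representation can exist, since $ca = 1$ together with $ac \neq 1$ would force $a$ to be one-sided but not two-sided invertible. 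Hence $V$ must be infinite-dimensional.

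Accordingly, I would let $V$ be the $\mathbb{Z}/2$-vector space with basis $\{v_n\}_{n \geq 0}$ and define
\begin{align*}
a(v_n) &= v_{2n}, & c(v_{2n}) &= v_n, & c(v_{2n+1}) &= 0, \\
d(v_n) &= v_{2n+1}, & b(v_{2n+1}) &= v_n, & b(v_{2n}) &= 0.
\end{align*}
Checking $ca = bd = 1$, $cd = ba = 0$, and $(ac + db)(v_k) = v_k$ on each basis vector is routine, and shows that the assignment extends to a well-defined and nonzero algebra homomorphism $\overline{C} \to \mathrm{End}_{\mathbb{Z}/2}(V)$. The main conceptual step is not this verification but recognizing the correct shape of operator to try; once the even/odd splitting of the basis is in hand, the rest of the argument is mechanical. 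Consequently $\overline{C}$ is nontrivial, as claimed.
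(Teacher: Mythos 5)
Your proof is correct and is essentially the paper's own argument: the paper likewise represents $\overline{C}$ on a countably infinite-dimensional $\mathbb{Z}/2$-vector space $\mscr{H}=\mscr{H}_1\oplus\mscr{H}_2$ with $a,d$ acting as the two isomorphisms onto the summands and $b,c$ as the corresponding one-sided inverses vanishing on the opposite summand, your even/odd basis splitting being just an explicit coordinate version of that decomposition. No substantive difference.
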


\begin{proof}
	To prove this, we define an action of $\overline{C}$ on $\mscr{H}$, where $\mscr{H}$ is a countably infinite-dimensional vector space over $\mathbb{Z}/2$.  Provided we can show this action is nontrivial, this will imply that $\overline{C}$ is nontrivial.

As with any infinite-dimensional vector space, $\mscr{H}$ can be written as
\[
	\mscr{H} = \mscr{H}_1 \oplus \mscr{H}_2,
\]
where $\mscr{H}_1 \simeq \mscr{H}_2 \simeq \mscr{H}$ as $\mathbb{Z}/2$-vector spaces, so any map $\mscr{H} \to \mscr{H}_1 \oplus \mscr{H}_2$ or $\mscr{H}_1 \oplus \mscr{H}_2 \to \mscr{H}$ defines an endomorphism of $\mscr{H}$.

Fix identifications $\mscr{H} \cong \mscr{H}_1$ and $\mscr{H} \cong \mscr{H}_2$ (throughout what follows the symbol $\cong$ will refer to these fixed identifications).

Let $a, b,c,d$ act on $\mscr{H}$ as follows:
\begin{itemize}
	\item Define $a: \mscr{H} \to \mscr{H}_1 \oplus \mscr{H}_2$ by the diagram
	\[
		\begin{diagram}[height=.7em]
			& & \mathcal{H}_1 \\
			& \ruTo^{0} & \\
			\mathcal{H} & & \oplus \\
			& \rdTo_{\cong} & \\
			& & \mathcal{H}_2. \\
		\end{diagram}		
	\]
	\item Define $b: \mscr{H}_1 \oplus \mscr{H}_2 \to \mscr{H}$ by the diagram 
	\[
		\begin{diagram}[height=.7em]
			\mathcal{H}_1 & & \\
			& \rdTo^{\cong} & \\
			\oplus & & \mathcal{H}.\\
			& \ruTo_{0} & \\
			\mathcal{H}_2 & &  \\
		\end{diagram}
	\]
	\item Define $c: \mscr{H}_1 \oplus \mscr{H}_2 \to \mscr{H}$ by the diagram
	\[
		\begin{diagram}[height=.7em]
			\mathcal{H}_1 & & \\
			& \rdTo^{0} & \\
			\oplus & & \mathcal{H}.\\
			& \ruTo_{\cong} & \\
			\mathcal{H}_2 & &  \\
			\end{diagram}		
	\]
	\item Define $d: \mscr{H} \to \mscr{H}_1 \oplus \mscr{H}_2$ by the diagram
	\[
		\begin{diagram}[height=.7em]
			& & \mathcal{H}_1 \\
			& \ruTo^{\cong} & \\
			\mathcal{H} & & \oplus \\
			& \rdTo_{0} & \\
			& & \mathcal{H}_2. \\
		\end{diagram}
	\]
\end{itemize}

Extending by linearity, the defining relations of $\overline{C}$ are preserved by this action, so the above induces a well-defined action of $\overline{C}$ on $\mscr{H}$ (alternatively, a representation of $\overline{C}$ into $\text{End}(\mscr{H})$).  Since the actions of $a$, $b$, $c$, and $d$ are clearly nontrivial, this is a nontrivial action, completing the proof of the lemma.
\end{proof}

Since $\overline{C}$ is a quotient of $C(m(10_{161}))$, Lemma~\ref{lem:cbar_nontrivial} implies that $C(m(10_{161}))$ is nontrivial, completing the proof of Theorem~\ref{thm:m10161}.

% section proof_of_thm_1 (end)

\section{The Contact Homology of the $m(10_{139})$} % (fold)
\label{sec:char_alg_m10_139}

Our goal in this section is to prove Proposition~\ref{pro:vanish} by showing that $1$ is in the image of the differential of Chongchitmate and Ng's conjecturally nondestabilizable  $m(10_{139})$.  This Legendrian $m(10_{139})$ is one of two examples given by Chongchitmate and Ng with nonmaximal Thurston--Bennequin invariants which computations suggest sit atop their own peaks in the $tb$--$r$ mountain range.  That the other---a Legendrian $m(12n_{242})$---also has vanishing contact homology and characteristic algebra follows from a similar argument to the one given below.  

\begin{figure}[htbp]
	\centering
		\includegraphics[scale=0.65]{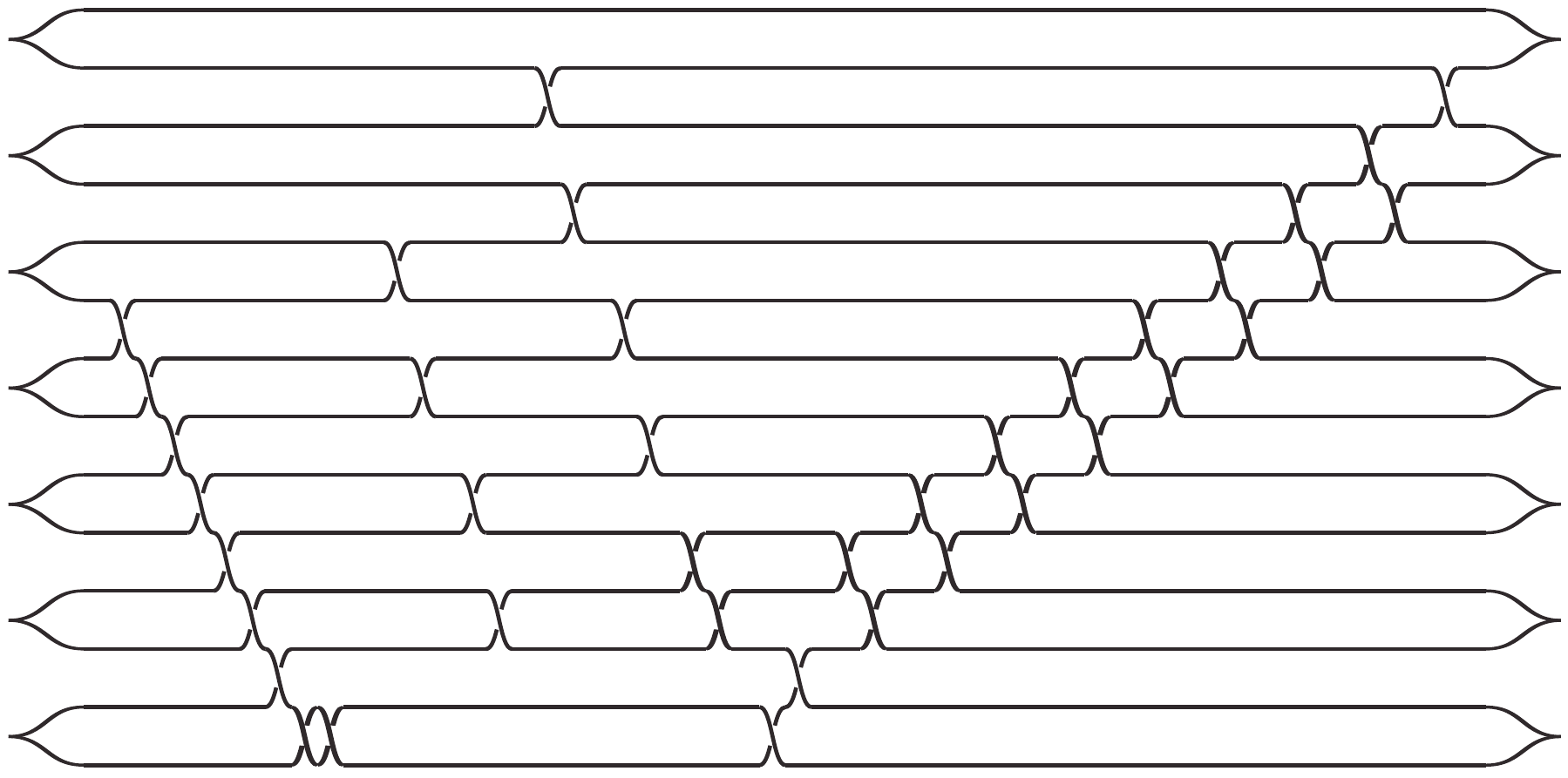}
	\caption{Chongchitmate and Ng's conjecturally nondestabilizable $m(10_{139})$}
	\label{fig:10_139}
\end{figure}

The plat diagram for the $m(10_{139})$ given in Figure~\ref{fig:10_139} is obtained from Chongchitmate and Ng's presentation.

The braid word for the plat diagram in Figure~\ref{fig:10_139} is:

\vspace{.1in}
\begin{center}
	\noindent$6, 7, 8, 9, 10, 11, 12, 13, 13, 5, 7, 9, 11, 2, 4, 6, 8, 10, 11,$\\ $13, 12, 10, 11, 9, 10, 8, 9, 7, 8, 6, 7, 5, 6, 4, 5, 3, 4, 2$
\end{center}
\vspace{.1in}

In order to prove that the contact homology and characteristic algebra of the $m(10_{139})$ are trivial, it suffices to construct an element   $a \in A$  such that $\partial a = 1$. From the presentation of the differential given in Appendix~\ref{sec:the_differential_m10_139}, we see that
\begin{multline*}
	1 = \partial \left(\vphantom{\frac{x}{y}}\!(x_2+x_{10})\!\left(\vphantom{\sqrt{x^2}}\!\!\left(\vphantom{x^{12}}(x_{41}x_{11} + x_{14}x_{42})x_{15} + x_{41}-x_{44}\right)x_{22} + x_{24}\right) \right.\\
	\left.\vphantom{\frac{x}{y}}+ (x_4+x_{16})(x_{15}x_{22}+x_{19}) + x_{6} + x_{43}\!\right).
\end{multline*}

Therefore, the contact homology and characteristic algebra of Chongchitmate and Ng's $m(10_{139})$ both vanish over $\mathbb{Z}[ t , t^{-1} ]$.

% section char_alg_m10_139 (end)

\bigskip

\bigskip

% \bibliographystyle{amsalpha}
% \nocite{*}
% \bibliography{LegCharNonvanish}

\bigskip

\bigskip

\appendix

\section{The Differential over $\mathbb{Z}/2$ for the $m(10_{161})$} % (fold)
\label{sec:the_differential}

\begin{itemize}

		\item $\partial x_{1} =    0$

		\item $\partial x_{2} =    x_{1}$

		\item $\partial x_{3} =    0$

		\item $\partial x_{4} =    x_{3}x_{1}$

		\item $\partial x_{5} =    x_{3}x_{2} + x_{4}$

		\item $\partial x_{6} =    x_{3}$

		\item $\partial x_{7} =    0$

		\item $\partial x_{8} =    x_{7}$

		\item $\partial x_{9} =    0$

		\item $\partial x_{10} =   x_{9}$

		\item $\partial x_{11} =   x_{3}$

		\item $\partial x_{12} =   0$

		\item $\partial x_{13} =   0$

		\item $\partial x_{14} =   0$

		\item $\partial x_{15} =   x_{14}$

		\item $\partial x_{16} =   0$

		\item $\partial x_{17} =   x_{16}$

		\item $\partial x_{18} =   0$

		\item $\partial x_{19} =   x_{1} + x_{12}x_{4} + x_{12}x_{11}x_{1}$

		\item $\partial x_{20} =   x_{2}x_{13} + x_{12}x_{5}x_{13} + x_{12}x_{11}x_{2}x_{13} + 1 + x_{12}x_{6} + x_{12}x_{11} + x_{19}x_{13}$

		\item $\partial x_{21} =   x_{2}x_{14} + x_{12}x_{5}x_{14} + x_{12}x_{11}x_{2}x_{14} + x_{12}x_{7} + x_{19}x_{14}$

		\item $\partial x_{22} =   x_{2}x_{15} + x_{12}x_{5}x_{15} + x_{12}x_{11}x_{2}x_{15} + x_{12}x_{8} + x_{19}x_{15} + x_{21}$

		\item $\partial x_{23} =   x_{2}x_{16} + x_{12}x_{5}x_{16} + x_{12}x_{11}x_{2}x_{16} + x_{12}x_{9} + x_{19}x_{16}$

		\item $\partial x_{24} =   x_{2}x_{17} + x_{12}x_{5}x_{17} + x_{12}x_{11}x_{2}x_{17} + x_{12}x_{10} + x_{19}x_{17} + x_{23}$

		\item $\partial x_{25} =   0$

		\item $\partial x_{26} =   x_{25}$

		\item $\partial x_{27} =   0$

		\item $\partial x_{28} =   0$

		\item $\partial x_{29} =   x_{28}x_{25}$

		\item $\partial x_{30} =   0$

		\item $\partial x_{31} =   x_{4} + x_{11}x_{1}$

		\item $\partial x_{32} =   x_{5}x_{13}x_{28} + x_{11}x_{2}x_{13}x_{28} + x_{6}x_{28} + x_{11}x_{28} + x_{5}x_{14} + x_{11}x_{2}x_{14} + x_{7} + x_{31}x_{13}x_{28} + x_{31}x_{14}$

		\item $\partial x_{33} =   0$

		\item $\partial x_{34} =   0$

		\item $\partial x_{35} =   x_{33}x_{2}x_{18} + x_{33}x_{12}x_{5}x_{18} + x_{33}x_{12}x_{11}x_{2}x_{18} + x_{33}x_{12} + x_{33}x_{19}x_{18} + x_{34}x_{27}x_{2}x_{18} + x_{34}x_{27}x_{12}x_{5}x_{18} + x_{34}x_{27}x_{12}x_{11}x_{2}x_{18} + x_{34}x_{27}x_{12} + x_{34}x_{27}x_{19}x_{18}$

		\item $\partial x_{36} =   x_{13}x_{28} + x_{14} + 1$

		\item $\partial x_{37} =   x_{5}x_{13}x_{29}x_{30} + x_{11}x_{2}x_{13}x_{29}x_{30} + x_{6}x_{29}x_{30} + x_{11}x_{29}x_{30} + x_{5}x_{15}x_{25}x_{30} + x_{11}x_{2}x_{15}x_{25}x_{30} + x_{8}x_{25}x_{30} + x_{5}x_{16}x_{30} + x_{11}x_{2}x_{16}x_{30} + x_{9}x_{30} + x_{5}x_{13} + x_{11}x_{2}x_{13} + x_{6} + x_{11} + x_{31}x_{13}x_{29}x_{30} + x_{31}x_{15}x_{25}x_{30} + x_{31}x_{16}x_{30} + x_{31}x_{13} + x_{32}x_{25}x_{30} + 1$

		\item $\partial x_{38} =   x_{33} + x_{30}x_{28}x_{26}x_{33} + x_{30}x_{29}x_{33} + x_{30}x_{28}x_{27} + 1$

		\item $\partial x_{39} =   x_{27}x_{2}x_{18} + x_{27}x_{12}x_{5}x_{18} + x_{27}x_{12}x_{11}x_{2}x_{18} + x_{27}x_{12} + x_{27}x_{19}x_{18} + 1$

		\item $\partial x_{40} =   x_{33}x_{2} + x_{33}x_{12}x_{5} + x_{33}x_{12}x_{11}x_{2} + x_{33}x_{19} + x_{34}x_{27}x_{2} + x_{34}x_{27}x_{12}x_{5} + x_{34}x_{27}x_{12}x_{11}x_{2} + x_{34}x_{27}x_{19} + 1$
\end{itemize}

% section the_differential (end)

\section{The Differential over $\mathbb{Z}[t,t^{-1}]$ for the $m(10_{139})$} % (fold)
\label{sec:the_differential_m10_139}

\begin{itemize}

		\item $\partial x_{1} =    0$

		\item $\partial x_{2} =    -x_{1}$

		\item $\partial x_{3} =    0$

		\item $\partial x_{4} =    -x_{3}$

		\item $\partial x_{5} =    0$

		\item $\partial x_{6} =    -x_{5}$

		\item $\partial x_{7} =    0$

		\item $\partial x_{8} =    -x_{7}$

		\item $\partial x_{9} =    0$

		\item $\partial x_{10} =   x_{1}$

		\item $\partial x_{11} =   0$

		\item $\partial x_{12} =   0$

		\item $\partial x_{13} =   0$

		\item $\partial x_{14} =   0$

		\item $\partial x_{15} =   0$

		\item $\partial x_{16} =   x_2 x_{11} + x_{10} x_{11} + x_3$

		\item $\partial x_{17} =   x_{11} x_{12}$

		\item $\partial x_{18} =   x_{12} x_{13}$

		\item $\partial x_{19} =   x_{12}$

		\item $\partial x_{20} =   0$

		\item $\partial x_{21} =   x_{18} x_{9} x_{20} - x_{19} x_{13} x_{9} x_{20} + x_{18} - x_{19} x_{13}$

		\item $\partial x_{22} =   0$

		\item $\partial x_{23} =   x_{9} x_{20} + 1 + x_{22} x_{13} x_{9} x_{20} + x_{22} x_{13}$

		\item $\partial x_{24} =   x_{11} x_{18} x_{22} + x_{17} x_{13} x_{22} + x_{11} x_{19} +x_{17}$

		\item $\partial x_{25} =   x_{11} x_{18} x_{23} + x_{17} x_{13} x_{23} + x_{11} x_{21} - x_{24} x_{13} x_{9} x_{20} - x_{24} x_{13}$

		\item $\partial x_{26} =   x_{13} x_{22} + 1$

		\item $\partial x_{27} =   x_{13} x_{23} + x_{26} x_{13} x_{9} x_{20} + x_{26} x_{13}$

		\item $\partial x_{28} =   x_{2} x_{17} x_{26} + x_{10} x_{17} x_{26} + x_{4} x_{12} x_{26} + x_{5} x_{26} + x_{16} x_{12} x_{26} + x_{2} x_{24} + x_{10} x_{24} + x_{4} x_{18} x_{22} + x_{16} x_{18} x_{22} + x_{6} x_{13} x_{22} + x_{7} x_{22} + x_{4} x_{19} + x_{16} x_{19} + x_{6}$

		\item $\partial x_{29} =   x_{2} x_{17} x_{27} + x_{10} x_{17} x_{27} + x_{4} x_{12} x_{27} + x_{5} x_{27} + x_{16} x_{12} x_{27} + x_{2} x_{25} + x_{10} x_{25} + x_{4} x_{18} x_{23} + x_{16} x_{18} x_{23} + x_{6} x_{13} x_{23} + x_{7} x_{23} + x_{4} x_{21} + x_{16} x_{21} + x_{8} x_{9} x_{20} + x_{20} + x_{8} - x_{28} x_{13} x_{9} x_{20} - x_{28} x_{13}$

		\item $\partial x_{30} =   x_{12} x_{26} + x_{18} x_{22} + x_{19}$
		
		\item $\partial x_{31} =    x_{12} x_{27} + x_{18} x_{23} + x_{21} + x_{30} x_{13} x_{9} x_{20} + x_{30} x_{13}$

		\item $\partial x_{32} =    x_{15} x_{11} x_{30} + x_{15} x_{17} x_{26} + x_{15} x_{24}$

		\item $\partial x_{33} =    x_{15} x_{11} x_{31} + x_{15} x_{17} x_{27} + x_{15} x_{25} - x_{32} x_{13} x_{9} x_{20} - x_{32} x_{13}$

		\item $\partial x_{34} =    x_{11} x_{30} + x_{17} x_{26} + x_{24}$

		\item $\partial x_{35} =    x_{11} x_{31} + x_{17} x_{27} + x_{25} + x_{34} x_{13} x_{9} x_{20} + x_{34} x_{13}$

		\item $\partial x_{36} =    x_{14} x_{15} x_{34} + x_{14} x_{32}$

		\item $\partial x_{37} =    x_{14} x_{15} x_{35} + x_{14} x_{33} - x_{36} x_{13} x_{9} x_{20} - x_{36} x_{13}$

		\item $\partial x_{38} =    x_{15} x_{34} + x_{32}$

		\item $\partial x_{39} =    x_{14} x_{38} + x_{36} + 1$

		\item $\partial x_{40} =   x_{15} x_{35} + x_{33} + x_{38} x_{13} x_{9} x_{20} + x_{38} x_{13} + 1$

		\item $\partial x_{41} =   x_{14} x_{15} + 1$

		\item $\partial x_{42} =   x_{15} x_{11} + 1$

		\item $\partial x_{43} =   x_{2} x_{17} + x_{10} x_{17} + x_{4} x_{12} + x_{5} + x_{16} x_{12} + 1$

		\item $\partial x_{44} =   x_{11} x_{18} + x_{17} x_{13} + 1$

		\item $\partial x_{45} =   x_{18} x_{9} - x_{19} x_{13} x_{9} + t^{-1}$
\end{itemize}

% section the_differential_m10_139 (end)

\end{document}